\documentclass{amsart}

\usepackage{amssymb,amsthm,amsmath,amsxtra,pstricks}
\usepackage[all]{xy}

\newcommand{\F}{\mathbb F}

\newcommand{\PP}{\mathbb P}

\newcommand{\Z}{\mathbb Z}

\DeclareMathOperator{\opchar}{char}
\DeclareMathOperator{\PGL}{PGL}

\numberwithin{equation}{section}

\theoremstyle{plain}

\newtheorem{prop}[equation]{Proposition}

\newtheorem{lem}[equation]{Lemma}

\newtheorem*{thmnostar}{Theorem}

\theoremstyle{remark}

\newtheorem{rmk}[equation]{Remark}

\begin{document}

\title[Nondegenerate curves over small fields]{Nondegenerate curves of low genus \\ over small finite fields}
\author{Wouter Castryck}
\date{\today}
\address{Katholieke Universiteit Leuven, Departement Wiskunde, Afdeling Algebra, Celestijnenlaan 200B, B-3001 Leuven (Heverlee), Belgium}
\email{wouter.castryck@gmail.com}
\author{John Voight}
\address{Department of Mathematics and Statistics, University of Vermont, 16 Colchester Ave, Burlington, VT 05401, USA}
\email{jvoight@gmail.com}

\begin{abstract}
In a previous paper, we proved that over a finite field $k$ of sufficiently large cardinality, all curves of genus at most $3$ over $k$ can be modeled by a bivariate Laurent polynomial that is nondegenerate with respect to its Newton polytope.  In this paper, we prove that there are exactly two curves of genus at most $3$ over a finite field that are \emph{not} nondegenerate, one over $\mathbb{F}_2$ and one over $\mathbb{F}_3$.  Both of these curves have remarkable extremal properties concerning the number of rational points over various extension fields.
\end{abstract}

\maketitle

Let $k$ be a perfect field with algebraic closure $\overline{k}$.
To a Laurent polynomial $f = \sum_{(i,j) \in \mathbb{Z}^2} c_{ij}
x^iy^j \in k[x^{\pm 1}, y^{\pm 1}]$, we associate its Newton
polytope $\Delta(f)$, the convex hull in $\mathbb{R}^2$
of the points $(i,j) \in \mathbb{Z}^2$ for which $c_{ij} \neq 0$.
An irreducible Laurent polynomial $f$ is called \emph{nondegenerate}
with respect to its Newton polytope if for all faces $\tau
\subset \Delta(f)$ (vertices, edges, and $\Delta(f)$ itself), the system of equations
\[ f|_\tau = x \frac{ \partial f|_\tau }{\partial x} = y \frac{ \partial f|_\tau }{\partial y} = 0 \tag{$*$} \]
has no solution in $\overline{k}^{* 2}$, where $f|_\tau = \sum_{(i,j) \in \mathbb{Z}^2 \cap \tau} c_{ij} x^iy^j$.

A curve $C$ over $k$ is called \emph{nondegenerate} if
it is birationally equivalent over $k$ to a curve defined by a Laurent
polynomial $f \in k[x^{\pm 1}, y^{\pm 1}]$ that
is nondegenerate with respect to its Newton polytope.
For such a curve, a vast amount of geometric information is encoded in the
combinatorics of $\Delta(f)$.  For example, the (geometric) genus of $C$ is equal to the number 
lattice points (points in $\Z^2$) lying in the interior of $\Delta(f)$.  Owing to this
connection, nondegenerate curves have become popular objects of study in
explicit algebraic geometry.  (See e.g.\ Batyrev \cite{Batyrev} and the introduction in our preceding work \cite{mainpaper} for further background and discussion.)

In a previous paper {\cite{mainpaper}}, we gave a partial answer to the natural question: \emph{Which curves are nondegenerate?}

\begin{thmnostar}
Let $C$ be a curve of genus $g$ over $k$. Suppose that one of these conditions holds:
\begin{enumerate}
  \item[(i)] $g = 0$;
  \item[(ii)] $g = 1$ and $C(k) \neq \emptyset$;
  \item[(iii)] $g = 2,3$, and either $17 \leq \#k < \infty$, or $\#k = \infty$ and $C(k) \neq \emptyset$;
  \item[(iv)] $g=4$ and $k=\overline{k}$.
\end{enumerate}
Then $C$ is nondegenerate.

If $g \geq 5$, then the locus $\mathcal{M}^\emph{nd}_g$ of
nondegenerate curves inside the coarse moduli space of curves of genus $g$ satisfies
$\dim \mathcal{M}^\emph{nd}_g = 2g + 1$, except for $g=7$ where $\dim \mathcal{M}^\emph{nd}_7 = 16$.
In particular, a generic curve of genus $g$ is nondegenerate if and only if $g \leq 4$.
\end{thmnostar}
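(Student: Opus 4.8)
The plan is to translate the whole statement into toric geometry and thence into the combinatorics of lattice polygons. A Laurent polynomial $f$ nondegenerate with respect to $\Delta = \Delta(f)$ cuts out a curve $C_f$ on the toric surface $X_\Delta$, whose smooth model has geometric genus $\#(\inter(\Delta) \cap \Z^2)$ by the fact recalled in the introduction, and conversely every nondegenerate curve arises so. Since, up to unimodular equivalence, there are only finitely many lattice polygons with a fixed number $g \geq 1$ of interior lattice points, the theorem splits into two tasks: (a) for $g \le 3$ (and $g = 4$ over $\kbar$), exhibiting for each $C$ a polygon $\Delta$ with $\#(\inter(\Delta) \cap \Z^2) = g$ and a nondegenerate $f \in k[x^{\pm 1}, y^{\pm 1}]$ whose curve is birational to $C$ over $k$; and (b) for $g \ge 5$, computing $\dim \Mgnd$ as the maximum, over all such $\Delta$, of the dimension of the family of curves nondegenerate with respect to $\Delta$, viewed inside $\Mg$.

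For (a) I would march through the standard birational models and read off their Newton polygons. A genus-$0$ curve is birational to $\PP^1$ and is handled by a unimodular triangle; a genus-$1$ curve with a rational point has a Weierstrass model with polygon $\conv\{(0,0),(3,0),(0,2)\}$, whose only interior point is $(1,1)$, and verifying $(*)$ is just the usual smoothness of a Weierstrass equation. The substantive cases are $g = 2,3$, where $C$ is hyperelliptic or (for $g=3$) a smooth plane quartic; one takes the triangle $\conv\{(0,0),(2g{+}2,0),(0,2)\}$ or $\conv\{(0,0),(4,0),(0,4)\}$, with the correct interior counts $2$ and $3$. The delicate point is that $(*)$ on the edges forces the edge restrictions of $f$ to be separable with nonzero roots, i.e.\ that $C$ meet the toric boundary of $X_\Delta$ transversally and avoid its vertices. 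To arrange this one must move $C$ by an automorphism of $X_\Delta$ defined over $k$ into general position against the boundary, and it is precisely the existence of such a $k$-rational transformation that needs $\#k$ large: a point-count bounds the finitely many forbidden boundary configurations and shows $\#k \ge 17$ suffices, while for $g=4$ one passes to $k = \kbar$ to guarantee enough room.

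For (b) the engine is a dimension formula for a single polygon. The projective linear system of curves with Newton polygon $\Delta$ has dimension $\#(\Delta \cap \Z^2) - 1$, two members are birational when related by $\Aut(X_\Delta)$, and nondegeneracy is an open condition on the coefficients (a Bertini/transversality argument on $X_\Delta$ makes the generic member nondegenerate once $\Delta$ is large enough). With generically finite stabilizers the contribution of $\Delta$ to $\Mgnd$ therefore has dimension
\[ \#(\Delta \cap \Z^2) - 1 - \dim \Aut(X_\Delta), \]
which one checks against the plane quartic: $15 - 1 - \dim\Aut(\PP^2) = 15 - 1 - 8 = 6 = 3g-3$. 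I would then maximize this quantity over the finitely many $\Delta$ with $\#(\inter(\Delta) \cap \Z^2) = g$. Writing $\dim \Aut(X_\Delta) = 2 + r_\Delta$, where $r_\Delta$ counts the roots of $X_\Delta$ carried by the edges of $\Delta$, and re-expressing the monomial count through Pick's formula and the lattice data on and just outside the boundary, the problem becomes a purely combinatorial extremal question. Its solution yields the uniform value $2g+1$, attained by a distinguished family of polygons, with one exceptional polygon at $g = 7$ whose root count drops so that the maximum rises to $16$. Comparing with $\dim \Mg = 3g-3$, which equals $2g+1$ exactly at $g=4$, and combining with (i)--(iv) for $g \le 4$, gives the final assertion that a generic curve is nondegenerate precisely for $g \le 4$.

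The main obstacle is the extremal problem in (b): one must prove, over all lattice polygons with exactly $g$ interior points, that $\#(\Delta \cap \Z^2) - 1 - \dim\Aut(X_\Delta)$ never exceeds $2g+1$, save for the single $g=7$ polygon. This demands exact control of $\Aut(X_\Delta)$ through $r_\Delta$, since the ``thin'' polygons that maximize the monomial count also inflate the automorphism group, and the two effects must be balanced to the lattice point; isolating the lone $g=7$ exception is the subtlest part. Supplying the matching lower bound---producing a polygon achieving $2g+1$ and confirming that its generic curve is genuinely nondegenerate rather than merely supported on the right polygon---is comparatively routine, but still requires the transversality argument of (b) to be made uniform in $g$.
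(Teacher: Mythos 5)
First, a point of orientation: this statement is not proved in the paper at hand. It is the main theorem of the authors' earlier work \cite{mainpaper}, quoted in the introduction as the point of departure; the entire content of the present article is the removal of the hypothesis $\#k \geq 17$ in case (iii). So there is no in-paper proof to compare against, and your outline must be measured against the argument actually carried out in \cite{mainpaper} and against what a complete proof requires. At the level of architecture your plan is faithful to that argument: low genus is handled by reading off the Newton polygons of standard birational models, and $g \geq 5$ by maximizing a dimension formula over the finitely many equivalence classes of lattice polygons with $g$ interior points.

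As a proof, however, the proposal has genuine gaps in each half. In (a), the case $g=4$ is not a matter of ``passing to $\kbar$ for room'': a nonhyperelliptic genus-$4$ curve lies on a quadric in $\PP^3$, and one needs the square $[0,3]^2$ (for $\PP^1 \times \PP^1$) and a separate polygon for the quadric cone; the triangles you list only cover $g \leq 3$. For $g = 2,3$ over finite fields, ``a point-count bounds the finitely many forbidden boundary configurations'' is not an argument from which the constant $17$ can be extracted: the actual proofs are a sieve making the hyperelliptic $p(x)$ squarefree under substitutions $y \leftarrow y + t(x)$ (compare Section~\ref{reducingboundhyperell} of this paper) and a count of $k$-rational tangent lines to a plane quartic via the $28$ bitangents (compare Section~\ref{reducingboundquartic}); these must be carried out explicitly, and they are where the specific bound comes from. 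In (b), the formula $\#(\Delta \cap \Z^2) - 1 - \dim \Aut(X_\Delta)$ presupposes that the map from nondegenerate models with polygon $\Delta$ to $\mathcal{M}_g$ has finite fibres --- that two nondegenerate models of the same abstract curve are related by a torus-and-roots automorphism for one of finitely many polygons --- which is a substantive theorem in \cite{mainpaper}, not a consequence of ``generically finite stabilizers''; one must also identify the correct group (the automorphisms compatible with the polarization, not all of $\Aut(X_\Delta)$). Finally, the extremal combinatorial problem --- that the maximum is $2g+1$ with the single $g=7$ exception --- is, as you acknowledge, the main obstacle, and it is left entirely open; without it the second half of the statement is unproved.
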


Throughout the rest of this article, we assume that $k$ is a finite field, and we consider the cases excluded in condition (iii) above by the condition that $\#k \geq 17$.  Based on a number of preliminary experiments, we guessed \cite[Remark 7.2]{mainpaper} that this condition is superfluous.  In truth we have the following theorem, which constitutes the main result of this paper.

\begin{thmnostar}
Let $C$ be a curve of genus $g \leq 3$ over a finite field $k$.  Then $C$ is nondegenerate unless 
$k = \mathbb{F}_2$ or $k = \mathbb{F}_3$, and $C$ is birational to 
\begin{center}
$C_2$: $(x+y)^4=(xy)^2+xy(x+y+1)+(x+y+1)^2$ over $\F_2$, \\
$C_3$: $y^3-y=(x^2+1)^2$ over $\F_3$,
\end{center}
respectively. 
\end{thmnostar}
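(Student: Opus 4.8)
The plan is to turn the statement into a finite problem and then separate a \emph{positive} part (exhibit a nondegenerate model for every curve but the two exceptions) from a \emph{negative} part (show the two exceptions admit no nondegenerate model whatsoever). First I would invoke the theorem of \cite{mainpaper} quoted above: it settles genus $0$, settles genus $1$ (every genus-$1$ curve over a finite field has a rational point, so condition (ii) always applies), and settles genus $2$ and $3$ over fields with at least $17$ elements. What remains is $g\in\{2,3\}$ over the finitely many fields $\F_q$ with $q\in\{2,3,4,5,7,8,9,11,13,16\}$. Over each of these only finitely many isomorphism classes of curves occur, so the statement is a priori a finite verification; the real work is to make that verification feasible and, for the two claimed exceptions, to upgrade it into a proof of impossibility.

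For the positive direction I would use that the genus of a nondegenerate model equals the number of interior lattice points of its Newton polytope, and recall the classification, up to affine unimodular equivalence, of lattice polygons whose interior hull carries $2$ or $3$ lattice points. For $g=2$ and for hyperelliptic $g=3$ the interior hull is a segment and $\Delta$ sits inside a "hyperelliptic" trapezoid; for non-hyperelliptic $g=3$ the interior hull is a triangle and the unique maximal polygon is $T_4=\conv\{(0,0),(4,0),(0,4)\}$, recovering the smooth plane quartic. Putting each curve in the matching shape — $y^2+h(x)y=f(x)$ in the hyperelliptic case, a plane quartic in the canonical case — I would check the face conditions $(*)$. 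In odd characteristic the conditions coming from the interior and from the vertical edges are automatic, since $y\,\partial_y$ applied to a $y^2$-leading polynomial vanishes only at $y=0$, while the quartic conditions reduce to smoothness; in either case nondegeneracy reduces to smoothness together with a sufficiently generic coordinate frame, namely a Möbius change in $x$ moving ramification away from $0$ and $\infty$, respectively a $\PGL_3$ change putting the quartic in general position relative to the coordinate triangle. This succeeds for all curves once the field is not too small, and the residual finite list over the very smallest fields I would dispose of by an explicit, computer-assisted search for a nondegenerate model among the admissible polygons.

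The content, and the main obstacle, lie in characteristics $2$ and $3$ and in the negative direction. In characteristic $2$ the equation $y^2=f(x)$ is inseparable, so one must use $y^2+h(x)y=f(x)$; then $y\,\partial_y f|_\tau=y\,h(x)$, and the interior condition imposes genuine constraints at the zeros of $h$, requiring a case analysis on $\deg h$ and its factorization. I expect this to show that every genus-$2$ curve, and every hyperelliptic genus-$3$ curve, stays nondegenerate, because the hyperelliptic pencil and its rational ramification already supply the rational structure that the edges of the polygon demand; the failures can only occur among non-hyperelliptic genus-$3$ curves over $\F_2$ and $\F_3$, where positioning a rational coordinate triangle is extremely rigid. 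The hard step is then to prove that $C_2/\F_2$ and $C_3/\F_3$ are \emph{not} nondegenerate, which requires ruling out all admissible polygons at once rather than a single chosen model. Here I would use that a nondegenerate model with polygon $\Delta$ forces $C$ to carry, for each edge $e$ of $\Delta$, an $\F_q$-rational effective divisor of degree equal to the lattice length of $e$ (the pullback of the corresponding toric boundary divisor), with prescribed splitting behaviour and subject to the toric linear equivalences. Because $C_2$ and $C_3$ have extremal point counts over their base fields and over several extensions — which I would pin down by computing their zeta functions — the rational divisor classes needed for any polygon on the finite list simply fail to exist. Finiteness of the polygon list and of the frame groups $\PGL_3(\F_q)$ and $\PGL_2(\F_q)$ makes this a terminating check, and the extremal zeta data is precisely what certifies that it comes out negative.
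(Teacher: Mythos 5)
Your overall architecture matches the paper's: quote the earlier theorem to dispose of $g\leq 1$ and of $g=2,3$ over large fields, reduce hyperelliptic curves to the trapezoidal polytope and non-hyperelliptic genus-$3$ curves to the triangle $4\Sigma$ (this containment, via the three non-collinear interior lattice points, is exactly the paper's key reduction), and finish the smallest fields by a finite, computer-assisted check. But there is a genuine gap in your negative direction, i.e., in the proof that $C_2$ and $C_3$ admit \emph{no} nondegenerate model. You propose to derive a contradiction from the existence, for each edge $e$ of a candidate polytope, of a $k$-rational effective divisor of degree the lattice length of $e$ ``with prescribed splitting behaviour,'' and to certify its nonexistence from the extremal zeta data. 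The difficulty is that nondegeneracy along an edge does not prescribe any splitting: it only requires the corresponding boundary divisor (the intersection with a coordinate line, in the quartic case) to be \emph{reduced}, i.e., the line to be non-tangent; the divisor may consist entirely of closed points of degree $2$ or $4$. For $C_2$ over $\F_2$ one has $\#C_2(\F_2)=0$ and $\#C_2(\F_4)=14$, so the curve carries seven degree-$2$ places and many reduced rational divisors of degree $4$ --- the zeta function alone does not forbid the divisors your argument needs. The actual obstruction is the geometric fact that every one of the seven $\F_2$-rational lines of $\PP^2$ is a bitangent of $C_2$ (equivalently, no triple of nonconcurrent nontangent rational lines exists), and similarly for $C_3$ over $\F_3$ one must rule out every position of the coordinate triangle, including those where coordinate vertices lie on the curve. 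The paper establishes this by an exhaustive check over all of $\PGL_3(\F_2)$ and $\PGL_3(\F_3)$ (finding, strikingly, that $f_2$ is invariant under the full group $\PGL_3(\F_2)$); your sketch replaces this verification by an assertion that the check ``comes out negative,'' which is precisely the content that needs proof.

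A secondary, smaller gap: in the positive direction you ``expect'' all hyperelliptic curves in characteristic $2$ to be nondegenerate after a suitable substitution $y\leftarrow y+t(x)$, but making this work down to fields as small as $\F_8$ requires an actual counting argument --- the paper sieves over the $q^{g+2}$ choices of $t$ to produce a squarefree $p_t$, and even then $\F_2$ and $\F_4$ must be handled by enumeration. Likewise, the feasibility of the final search over $\F_5$ rests on the tangent-line count ($\leq m+28$ via the $28$ bitangents) that reduces the quartic case to $q\leq 5$; without these quantitative inputs the ``finite verification'' you invoke is not obviously within reach.
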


\noindent Both $C_2$ and $C_3$ have genus $3$. In particular, all curves of genus $2$ are nondegenerate.

Intriguingly, $C_2$ and $C_3$ have other remarkable properties: they obtain
an extremal number of rational points over certain extension fields of $\F_2$ and $\F_3$, respectively.

The paper is organized into four sections.  In Sections~\ref{reducingboundhyperell}--\ref{reducingboundquartic}, we refine the bound on $\#k$ which guarantees that a curve of genus $2$ or $3$ over $k$ is nondegenerate.  In Section~\ref{bruteforce}, we perform an exhaustive computation using the computer 
algebra system \textsf{Magma} \cite{Magma} to reduce the bound further. At the same time, we search the remaining finite fields $\F_2$ and $\F_3$ for curves that are not nondegenerate.  We conclude by discussing the extremal properties of the two resulting curves in Section~\ref{extremal}.

\section{Refining the bound for hyperelliptic curves} \label{reducingboundhyperell}

If $\opchar k$ is odd, then any hyperelliptic curve over $k$ is easily seen to be nondegenerate.  Indeed, it is well-known that a hyperelliptic curve of genus $g$ is birationally equivalent over $k$ to an affine curve of the form $y^2=p(x)$, where $p(x) \in k[x]$ is a squarefree polynomial of degree $2g+1$ or $2g+2$.  Then directly 
from the definition $(*)$, one sees that the polynomial $f(x,y)=y^2 - p(x)$ is nondegenerate with respect to its Newton polytope.

If instead $\opchar k = 2$, then a hyperelliptic curve of genus $g$ has an affine model of the
more general form
\begin{equation} \label{Weierstrass}
  y^2 + r(x)y = p(x)
\end{equation}
with $r(x) \in k[x]$ of degree at most $g + 1$, and $p(x) \in k[x]$ of degree at most $2g + 2$, and at least
$2g + 1$ if $\deg r(x) < g + 1$ (see Enge \cite[Theorem~7]{Enge}).
Moreover, such a model will not have any singularities in the affine plane; however, this condition is not enough to ensure that the defining polynomial $f(x,y)=y^2 + r(x)y + p(x)$ is nondegenerate with respect to its Newton polytope.

\begin{rmk}
There is a small erratum in our previous paper \cite[Section~5]{mainpaper}.  We write that one can always take $2\deg r(x) \leq \deg p(x)$ and $\deg p(x) \in \{2g + 1,2g + 2\}$ in (\ref{Weierstrass}).
This might however fail if $k = \F_2$ and the hyperelliptic curve $C$ has the property that the degree $2$ morphism $\pi: C \rightarrow \mathbb{P}^1$ is
\emph{completely split} over $k$, i.e., there are two distinct points in $C(k)$ above each point $0,1,\infty \in \mathbb{P}^1(k)$.  This erratum has no effect on any further statement in the paper \cite{mainpaper}.
\end{rmk}

The main result of this section is as follows.

\begin{prop} \label{yaysec1}
Let $C$ be a hyperelliptic curve of geometric genus $g \geq 2$ over a finite field $k$.  If $\#k$ is odd or $\#k \geq g+4$, then $C$ is nondegenerate.
\end{prop}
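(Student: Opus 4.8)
The case of odd $\#k$ is already settled by the discussion preceding the statement: a model $y^2 = p(x)$ with $p$ squarefree of degree $2g+1$ or $2g+2$ makes $f = y^2 - p$ nondegenerate. So the content is the case $\opchar k = 2$, where the hypothesis reads $\#k \ge g+4$. Since $g \ge 2$ this forces $\#k \ge 6$, hence $\#k \ge 8$ as $\#k$ is a power of $2$; in particular $k \ne \F_2$, so by the Remark I may fix a Weierstrass model $f = y^2 + r(x)y + p(x)$ with $2\deg r \le \deg p$ and $\deg p \in \{2g+1, 2g+2\}$, having no affine singularities. Its Newton polytope is then the triangle $\Delta = \conv\{(0,0), (\deg p, 0), (0,2)\}$, whose $g$ interior lattice points $(1,1), \dots, (g,1)$ reconfirm the genus, and I would prove nondegeneracy by checking $(*)$ face by face.

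Two families of faces come for free. Every vertex is a monomial, so $f|_\tau = 0$ has no solution in $\kbar^{*2}$. For $\tau = \Delta$ I would exploit that in characteristic $2$ one has $\partial f/\partial y = r(x)$, so the third equation of $(*)$ reads $y\,r(x) = 0$ and forces $r(x) = 0$ on the torus; the system then collapses to $f = \partial f/\partial x = \partial f/\partial y = 0$, i.e.\ to an affine singularity on $\kbar^{*2}$, of which there are none. Only the three edges remain, and the same identity $\partial f/\partial y = r(x)$ makes their analysis short. On the left edge $x=0$ the restriction is $y^2 + r(0)y + p(0)$ with $y\,\partial_y(f|_\tau) = r(0)y$, so the edge is nondegenerate as soon as $r(0) \ne 0$. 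On the hypotenuse, if $\deg p = 2g+1$ then $x\,\partial_x(f|_\tau) = (2g+1)p_{2g+1}x^{2g+1} \ne 0$ on the torus (as $2g+1$ is odd) and the edge is automatically fine, while if $\deg p = 2g+2$ the left-edge computation repeats and shows nondegeneracy holds iff $\deg r = g+1$.

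Geometrically the zeros of $r$ are exactly the finite branch points of $\pi \colon C \to \PP^1$ (over $x_0$ the fibre $y^2 + r(x_0)y + p(x_0)$ degenerates precisely when $r(x_0) = 0$), so the left-edge and hypotenuse conditions are conditions on the behaviour of $\pi$ over $0$ and $\infty$. I would therefore apply an automorphism of $\PP^1$ sending a non-branch rational point to $\infty$ (to obtain either $\deg r = g+1,\ \deg p = 2g+2$, or $\deg p = 2g+1$) and a further non-branch point to $0$ (to obtain $r(0)\ne 0$). There are at most $g+1$ branch points, whereas $\#\PP^1(k) = \#k+1 \ge g+5$, so there is room to make both choices: this is the step that consumes the hypothesis $\#k \ge g+4$.

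The remaining and, I expect, decisive face is the bottom edge $y=0$, where $f|_\tau = p(x)$ and $(*)$ demands that $p$ have no repeated root in $\kbar^*$ — equivalently that the function $y$ on $C$ have only simple zeros away from $\pi^{-1}(0)$. This is not a consequence of smoothness (a repeated root of $p$ at a non-branch point $x_0$ has $\partial f/\partial y = r(x_0)\ne 0$, hence is smooth yet degenerate), and, crucially, it cannot be repaired by the $\PP^1$-coordinate changes above. The plan is to normalise $p$ beforehand by a shift $y \mapsto y + t(x)$, replacing $p$ by $p + rt + t^2$ while leaving $r$, the branch points, the smoothness, and (for $\deg t$ small enough) the polytope untouched; since for $\deg p \ge 2g+1$ the relevant complete linear system is very ample, a generic such shift renders $p + rt + t^2$ separable, whereupon separability is preserved by the later translations. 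Making ``generic'' effective over the small field $k$ — bounding the failing shifts $t$ and reconciling that count with the branch-point budget already spent on the side faces — is the technical heart of the argument, and the place where the precise constant $g+4$ must be justified.
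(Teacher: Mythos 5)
Your outline reproduces the skeleton of the paper's proof---pass to even characteristic with $q=\#k\geq 8$, normalize the model $y^2+r(x)y+p(x)$ so that $r$ has degree $g+1$ and nonzero constant term (which is where $q>g+1$ is used), observe via the face-by-face analysis that the only remaining obstruction is squarefreeness of $p$, and try to remove it by a substitution $y\leftarrow y+t(x)$ with $\deg t\leq g+1$, which replaces $p$ by $p_t=p+rt+t^2$ and fixes $r$. But the proposal stops exactly where the proof begins: you explicitly defer ``making generic effective over the small field $k$'' as ``the technical heart of the argument,'' and that deferred step is the entire quantitative content of the proposition. Over a field of size $g+4$ there are only $q^{g+2}$ available shifts $t$, so ``a generic shift works'' is not a soft statement about a very ample linear system; one must actually count the bad shifts and compare, and without that count the constant $g+4$ is unjustified.

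The missing idea is a sieve over square divisors, driven by the following observation: if $v$ is monic irreducible of degree $m\leq g+1$ and $v^2$ divides both $p_{t_1}$ and $p_{t_2}$, then subtracting gives $v^2\mid (t_1+t_2)(r+t_1+t_2)$, and if $v$ divides both factors then $v\mid r$. In the case $v\nmid r$ this pins $t$ modulo $v^2$ to at most two residues, giving at most $2q^{g+2-2m}$ bad shifts when $2m\leq g+1$ and at most $2$ bad shifts when $2m>g+1$; in the case $v\mid r$ it pins $t$ modulo $v$, giving at most $q^{g+2-m}$ bad shifts, summed over the at most $g+1$ irreducible factors of $r$. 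Summing over all $v$ using the bound $q^m/m$ on the number of monic irreducibles of degree $m$ yields at most $\bigl(g+3+\tfrac{2}{g+1}+\tfrac{2}{q-1}\bigr)q^{g+1}$ bad shifts in total, which is strictly less than $q^{g+2}$ precisely when $q\geq g+4$ (using $g\geq 2$ and $q\geq 8$). Two smaller remarks: the paper runs the normalization of $r$ \emph{before} the shift in $y$ (the shift does not disturb $r$), which avoids your need to argue that separability of $p$ survives the map $x\mapsto 1/x$, $y\mapsto y/x^{g+1}$---under that map $p$ picks up a factor $x^{2g+2-\deg p}$ that can be a square, harmless on the torus but a loose end in your ordering; and the Remark you invoke is an erratum warning that the normalization $2\deg r\leq\deg p$, $\deg p\in\{2g+1,2g+2\}$ can \emph{fail} over $\F_2$---you are safe since $q\geq 8$, but the paper's proof only uses Enge's weaker form $\deg r\leq g+1$, $\deg p\leq 2g+2$.
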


\begin{proof}
Let $\#k=q$.  By the above, we may assume that $q \geq 8$ is even and that
$C$ is given by an equation of type (\ref{Weierstrass}).  Let $f(x,y)=y^2+r(x)y+p(x)$.

First, we claim that after applying a birational transformation we may assume that $r(x)$ is a polynomial of degree $g+1$ with nonzero constant term.  Since $q \geq g + 4 > g + 1$, there is an $a \in k$ such that $r(x-a)$ has nonzero constant term, so replacing $x \leftarrow x-a$ we may assume $r(x)$ has nonzero constant term.  Then the transformed polynomial
\[ f'(x,y)=x^{2g+2}f(1/x,y/x^{g+1}) = y^2 + r'(x)y + p'(x), \]
which corresponds to applying the applying the $\mathbb{Z}$-affine map
\[ (X,Y) \mapsto (2g + 2 - X - (g + 1)Y, Y) \]
to the exponent vectors of $f(x,y)$, has the property that $\deg r'(x)=g+1$.  Making another substitution $x \leftarrow x - b$ then completes the argument.

Then using the definition $(*)$, a short case-by-case analysis of the possible Newton polytopes 
shows that if $p(x)$ is squarefree, then $f(x,y)=y^2 + r(x)y + p(x)$ is nondegenerate with respect to its Newton polytope.
For each $t(x) \in k[x]$ of
degree at most $g+1$, consider the change of variables $y \leftarrow y + t(x)$; then under this transformation we have
$p(x) \leftarrow p_t(x) = p(x) + r(x)t(x) + t(x)^2$ and $r(x)$ is unchanged (since $\opchar k=2$). We use a sieving argument to show that there exists
a choice of $t(x)$ such that $p_t(x)$ is squarefree.  Note we have $q^{g+2}$ choices for $t(x)$.

Suppose that $p_t$ is not squarefree.  Then $p_t(x)$ is divisible by the square of
a monic irreducible polynomial $v(x)$ of degree $m \leq g + 1$.  But note that if $v^2 \mid p_{t_1}$ and $v^2 \mid p_{t_2}$ for two choices $t_1,t_2$, then subtracting we have
\[ v^2 \mid \bigl(r(t_1+t_2) + t_1^2+t_2^2\bigr) = (t_1+t_2)(r+t_1+t_2). \]
Moreover, if $v$ divides each of these two factors then in fact $v \mid r$.

We are then led to consider two cases.  First, 
suppose that $v \nmid r$.  Then either $v^2 \mid (t_1+t_2)$ or $v^2 \mid (r+t_1+t_2)$.
Let $h=\lfloor (g+1)/2 \rfloor$. If $m=\deg v \leq h$, then by sieving we conclude that $v^2 \mid p_t$ for at most $2q^{g+1-2m+1}=2q^{g+2-2m}$ values of $t$.  On the other hand, if $m > h$ then $\deg v^2 > g+1$ so by sieving we now have $v \mid p_t$ for at most two values of $t$.  Since the number of monic irreducible polynomials of degree $m$ over $k$ is bounded by $q^m/m$, the number of values of $t$ such that $p_t$ is divisible by $v^2$ with $v \nmid r$ is at most
\begin{align*}
& q (2q^{g+2-2}) + \frac{q^2}{2}(2q^{g+2-4}) + \dots + \frac{q^h}{h}(2q^{g+2-2h})+ 2\frac{q^{h+1}}{h+1} + \dots + 2\frac{q^{g+1}}{g + 1} \\
& \qquad = 2 \left( q^{g+1}+\frac{q^g}{2} + \dots + \frac{q^{g+2-h}}{h} + \frac{q^{h+1}}{h+1} + \dots + \frac{q^{g+1}}{g + 1} \right) \\
& \qquad = \left( 2 + \frac{2}{g+1} \right) q^{g+1} + 2 \left( \sum_{i=2}^h \frac{q^{g + 2 - i}}{i}
+ \sum_{i=h+1}^g \frac{q^i}{i}  \right)\\
& \qquad \leq \left( 2 + \frac{2}{g+1} \right) q^{g+1} + 2 \frac{q^{g+1} - 1}{q-1} \qquad \text{(note $h \geq 1$)} \\
& \qquad \leq \left( 2 + \frac{2}{g+1} + \frac{2}{q-1} \right) q^{g+1}.
\end{align*}

Next, suppose that $v \mid r$.  Then in any case $v \mid (t_1+t_2)$, and hence there are at most $q^{g+1-m+1}$ values of $t$ such that $v^2 \mid p_t$.  Since $\deg r \leq g+1$, in the worst case $r$ splits into $g+1$ linear factors over $k$, and
we have at most $(g+1)q^{g+1}$ values of $t$ for which $p_t$ is divisible by $v^2$ for some $v \mid r$.

Putting these together, we can find a value of $t(x)$ such that $p_t(x)$ is squarefree if
\[
q^{g+2} > \left( g+ 3 + \frac{2}{g+1} + \frac{2}{q-1} \right)q^{g+1},
\]
which holds whenever $q \geq g + 4$, since $g \geq 2$ and $q \geq 8$.
\end{proof}

For our genera of interest $g=2$ and $g=3$, Proposition \ref{yaysec1} proves that all hyperelliptic curves are nondegenerate except possibly over $\mathbb{F}_2$ and $\mathbb{F}_4$.

\section{Refining the bound for plane quartics} \label{reducingboundquartic}

In this short section, we refine the bound as in Section~\ref{reducingboundhyperell} but now for plane quartics.

\begin{lem} \label{planequartics}
Let $C \subset \PP^2$ be a nonsingular plane quartic over a finite field $k$.  If $\#k \geq 7$, then $C$ is nondegenerate.
\end{lem}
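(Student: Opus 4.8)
The plan is to present $C$ as the zero locus of a homogeneous quartic $F(X,Y,Z)$ and, after a projective change of coordinates, to produce a plane affine model whose Newton polytope is the standard quartic triangle $T = \conv\{(0,0),(4,0),(0,4)\}$. Dehomogenizing by $f(x,y) = F(x,y,1)$, the support of $f$ lies in $T$, and $\Delta(f) = T$ exactly when the coefficients of $1$, $x^4$, and $y^4$ are all nonzero, i.e.\ when none of the three coordinate points $[0:0:1]$, $[1:0:0]$, $[0:1:0]$ lies on $C$. Since $T$ has precisely the three interior lattice points $(1,1),(2,1),(1,2)$, this is the expected model for a genus-$3$ curve, so I would insist on $\Delta(f)=T$ from the outset.

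Next I would translate the nondegeneracy conditions $(*)$ face by face. Each vertex of $T$ contributes a single monomial, which never vanishes on $\overline{k}^{*2}$, so the vertex conditions hold automatically once $\Delta(f) = T$. The two-dimensional face $T$ itself requires that $f = x\partial f/\partial x = y\partial f/\partial y = 0$ have no common solution in $\overline{k}^{*2}$; these are exactly the singular points of the affine curve $\{f=0\}$ inside the torus, so smoothness of $C$ makes this condition automatic. The three edges are the only substantive cases: the edge joining $(4,0)$ to $(0,4)$ carries the leading form $F(x,y,0)$, while the edges $\{j=0\}$ and $\{i=0\}$ carry $F(x,0,1)$ and $F(0,y,1)$. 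In each case $(*)$ fails precisely when the corresponding binary quartic has a repeated root in $\overline{k}^{*}$---equivalently, since the corner points already avoid $C$, when the line $Z=0$, $Y=0$, or $X=0$, respectively, is tangent to $C$.

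Because nondegeneracy is a $k$-birational invariant and any $g \in \PGL_3(k)$ carries $C$ to a $k$-isomorphic curve while moving the coordinate frame arbitrarily, the lemma reduces to a purely geometric existence statement: over $k$ there exist three lines in general position (a triangle) such that each side meets $C$ in four distinct geometric points (is not tangent to $C$) and none of the three vertices lies on $C$. I would establish existence by counting. The tangent lines to $C$ are the $\F_q$-points of the dual curve $C^{*}\subset\PP^{2,*}$, whose class is at most $12$; bounding $\#C^{*}(\F_q)$ via its degree and geometric genus, together with the Weil bound $\#C(\F_q)\le q+1+6\sqrt{q}$ controlling how many rational points can fall on a prospective side or vertex, one can count the triangles that violate either condition and show this count is strictly smaller than the total number of triangles as soon as $q\ge 7$.

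The hard part will be making this count both uniform in the characteristic and tight enough to reach $q\ge 7$, since the bound must cover $q=8$ and $q=9$. In small characteristic the Gauss map of $C$ can be inseparable, so the naive class $12$ and the expectation that $C^{*}$ has geometric genus $3$ may fail; I would therefore need an estimate for the number of rational tangent lines that does not presuppose separability, for instance by controlling $\#C^{*}(\F_q)$ through the normalization of $C^{*}$, or by estimating tangencies directly via the discriminant of $F$ restricted to a pencil of lines. Once a clean $O(q)$ bound on the tangent lines is secured, assembling the triangle is a routine inclusion--exclusion; the delicate point is verifying that the available slack is still positive exactly at the threshold $q=7$.
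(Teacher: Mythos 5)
Your reduction of nondegeneracy to finding a suitable coordinate triangle is the right general shape, but the proposal has two genuine gaps. First, the counting step that you defer is exactly the hard content, and the route you sketch for it does not close. You propose to bound the $k$-rational tangent lines via the dual curve $C^{*}$ and you correctly flag that in characteristic $2$ and $3$ the Gauss map may be inseparable, so the class-$12$ bound and the genus of $C^{*}$ are unreliable---but you leave this unresolved, and it is precisely the cases $q=7,8,9$ (hence $\opchar k = 7,2,3$) that must be covered. The paper sidesteps the dual curve entirely with an elementary Galois argument: a $k$-rational line tangent at a point of $C(\overline{k})\setminus C(k)$ is tangent at every conjugate of that point, and since a line meets a quartic with total multiplicity $4$, the point must be quadratic and the line a bitangent. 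Hence the number of rational tangent lines is at most $\#C(k)+28$ (at most $28$ bitangents in every characteristic, by Ritzenthaler), uniformly and with no separability hypothesis. Second, even with a clean tangent-line bound, the Weil estimate alone fails at the threshold: for $q=7$ one has $q+1+6\sqrt{q}+28\approx 51.9>49=q^2$, so you cannot ``verify the slack is positive at $q=7$'' without a sharper input; the paper invokes Serre's bound $\#C(\F_7)\le 20$ to get $20+28=48<49$.

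A further, self-inflicted difficulty: you insist that $\Delta(f)$ equal the full triangle $\conv\{(0,0),(4,0),(0,4)\}$, which forces the three vertices of your coordinate triangle to avoid $C$. This extra condition is unnecessary---if a coordinate point lies on $C$ the Newton polytope merely has the corresponding corner truncated, and the nondegeneracy conditions on the new short edges are automatic (this is what the dashed polygon in the paper's figure encodes). Worse, the condition is costly: it turns the existence problem into one about \emph{pairs} of lines meeting off $C(k)$, and at $q=7$, where only about $9$ of the $57$ rational lines are guaranteed nontangent while each nontangent line can carry up to $4$ points of $C(k)$ (excluding up to $28$ candidate partners), the inclusion--exclusion no longer visibly closes. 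Drop the requirement that the triangle's vertices avoid $C$ and require only three nonconcurrent nontangent $k$-rational lines; then fewer than $q^2$ tangent lines suffices, since more than $q+1$ nontangent lines always contain three nonconcurrent ones.
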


\begin{proof}
Again analyzing the conditions of nondegeneracy \cite[Examples 1.5--1.6]{mainpaper}, we see that to prove that $C$ is nondegenerate it suffices to find three nonconcurrent $k$-rational lines in $\PP^2$
which are not tangent to $C$.  The projective
transformation which maps the three intersection points to the
coordinate points (and the lines to the coordinate lines) realizes $C$ as nondegenerate with respect
to a Newton polytope of the following type:
\begin{center}
\begin{pspicture}(-0.5,-0.5)(2.5,2.5)
\pspolygon[fillstyle=solid,linestyle=dashed](0.5,0)(1.5,0)(1.5,0.5)(0.5,1.5)(0,1.5)(0,0.5)
\pspolygon[fillstyle=none](0,0)(2,0)(0,2)
\psline{->}(-0.5,0)(2.5,0)
\psline{->}(0,-0.5)(0,2.5)
\rput(-0.18,2){\small $4$}
\rput(-0.18,1.5){\small $3$} \rput(-0.18,0.5){\small $1$}
\rput(2,-0.25){\small $4$} \rput(1.5,-0.25){\small $3$}
\rput(0.5,-0.25){\small $1$}
\end{pspicture}
\end{center}
(A dashed line appears as a face if our transformed curve contains the corresponding coordinate point.)

Write $m=\#C(k)$ and $q=\#k$. Since there are $q^2 + q + 1$ lines which are $k$-rational in $\mathbb{P}^2$, and
the number of $k$-rational lines through a fixed point is $q+1$, it suffices to prove that $C$ has strictly less than $q^2$ $k$-rational tangent lines.

We claim that the number of $k$-rational tangent lines is at most $m+28$.
Of course each point of $C(k)$ determines a tangent line.
Suppose a $k$-rational line is tangent at a point of $C(\overline{k}) \setminus C(k)$; then
it is also tangent at each of the Galois conjugates of the point, which since $C$ is defined by a plane quartic immediately implies that the point is defined over a quadratic extension and
that the line is a bitangent.  By classical geometry and the theory of theta characteristics, there are at most $28$ bitangents (see e.g.\ Ritzenthaler \cite[Corollary 1]{Ritzenthaler}), and this proves the claim.

Thus if $q^2 > m+28$, we can find three nonconcurrent nontangent lines. By the Weil bound, it is sufficient that
\[ q^2 > q + 1 + 6\sqrt{q} + 28 \]
which holds whenever $q \geq 8$.  In fact, when $q=7$ then $m \leq 20$ by a result of Serre \cite{Serrenotes} (see also Top \cite{Top}), and so $q^2 > m+28$ for all $q \geq 7$.
\end{proof}

This lemma therefore proves that all plane quartics defined over finite fields are nondegenerate except possibly over $\mathbb{F}_q$ with $q\leq 5$.

\section{Computational results} \label{bruteforce}

From the results of the previous two sections, in order to prove our main theorem we perform an exhaustive computation in \textsf{Magma} to deal with the remaining cases:
\begin{enumerate}
  \item[(1a)] hyperelliptic curves of genus $g=2$ over $\mathbb{F}_2$ and $\mathbb{F}_4$;
  \item[(1b)] hyperelliptic curves of genus $g=3$ over $\mathbb{F}_2$ and $\mathbb{F}_4$;
  \item[(2)] nonsingular quartics in $\mathbb{P}^2$ over $\mathbb{F}_2$, $\mathbb{F}_3$, $\mathbb{F}_4$ and $\mathbb{F}_5$ (genus $g=3$).
\end{enumerate}
To this end, we essentially enumerated all irreducible polynomials whose Newton polytope is contained in
\begin{center}
\begin{pspicture}(-0.5,-0.5)(2.1,1.5)
\pspolygon[fillstyle=none](0,0)(1.8,0)(0,0.6)
\psline{->}(-0.5,0)(2.1,0)
\psline{->}(0,-0.5)(0,1)
\rput(-0.18,0.6){\small $2$}
\rput(1.8,-0.25){\small $6$}
\end{pspicture}
\qquad \qquad
\begin{pspicture}(-0.5,-0.5)(2.7,1.5)
\pspolygon[fillstyle=none](0,0)(2.4,0)(0,0.6)
\psline{->}(-0.5,0)(2.7,0)
\psline{->}(0,-0.5)(0,1)
\rput(-0.18,0.6){\small $2$}
\rput(2.4,-0.25){\small $8$}
\end{pspicture}
\qquad \qquad
\begin{pspicture}(-0.5,-0.5)(1.5,1.5)
\pspolygon[fillstyle=none](0,0)(1.2,0)(0,1.2)
\psline{->}(-0.5,0)(1.5,0)
\psline{->}(0,-0.5)(0,1.5)
\rput(-0.18,1.2){\small $4$}
\rput(1.2,-0.25){\small $4$}
\end{pspicture}
\end{center}
respectively, regardless of whether they define a curve of genus $g$ or not.  For each of these,
we checked whether the Newton polytope contained $g$ interior lattice points, since by Baker's inequality \cite[Theorem~4.1]{BeelenPellikaan} an irreducible Laurent polynomial $f \in k[x^{\pm 1},y^{\pm 1}]$ defines a curve whose (geometric) genus is at most the number of lattice points in the interior of $\Delta(f)$.

The polynomials $f$ that passed this test were then checked
for nondegeneracy with respect to the edges of $\Delta(f)$.  Checking nondegeneracy with respect to the edges boils down to checking squarefreeness of a number of univariate polynomials of small degree, which can be done very efficiently.  The nondegeneracy condition with respect to the vertices of $\Delta(f)$ is automatic.  The nondegeneracy condition with respect to $\Delta(f)$ itself is also automatic if $f$ defines a genus $g$ curve (by Baker's inequality), so we can disregard any polynomial for which this condition is not satisfied.

The polynomials $f$ that were \emph{not} nondegenerate with respect to the edges then saw
further investigation.  First, and only at this stage, we verified that in fact $f$ defines a curve of genus $g$.  Then, repeatedly, we applied a random transformation to $f$ of the following form:
\begin{enumerate}
  \item $(x,y) \leftarrow (x-a, y-h(x))$ for $a \in k$ and $h(x) \in k[x]$ of degree at most $g+1$ (for hyperelliptic curves);
  \item a projective linear transformation (for plane quartics).
\end{enumerate}
We then again checked the resulting polynomial for nondegeneracy with respect to the edges. Polynomials for which there were $1000$ failures in a row were stored in a list.

In each of the hyperelliptic curve cases the list remained empty, implying the following lemma.

\begin{lem}
All hyperelliptic curves of genus at most $3$ defined over a finite field are nondegenerate.
\end{lem}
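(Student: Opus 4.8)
The plan is to reduce to a finite verification and then carry it out by exhaustive machine computation. By Proposition~\ref{yaysec1}, every hyperelliptic curve of genus $g \in \{2,3\}$ over a finite field $k$ is nondegenerate as soon as $\#k$ is odd or $\#k \geq g+4$, so the only cases left open are exactly (1a) and (1b): hyperelliptic curves of genus $2$ and $3$ over $\F_2$ and $\F_4$. Since these fields are small and each such curve admits a Weierstrass model $y^2 + r(x)y = p(x)$ whose Newton polytope is contained in one of the two triangles drawn above, I would enumerate the relevant curves directly.

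First I would run over all Laurent polynomials $f = y^2 + r(x)y + p(x)$ supported in the appropriate triangle, keep the irreducible ones, and for each compute the number of interior lattice points of $\Delta(f)$, discarding $f$ unless this number equals $g$; by Baker's inequality this is necessary for $f$ to define a genus-$g$ curve. On the survivors I would test nondegeneracy with respect to the edges of $\Delta(f)$, which amounts to checking squarefreeness of the associated edge polynomials, recalling that the vertex condition is automatic and that the $\Delta(f)$-condition is automatic once the interior lattice count equals $g$. Any $f$ that passes the edge test is nondegenerate, and we are finished with it.

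The main obstacle is the handling of the $f$ that fail the edge test. Because nondegeneracy is a birational notion, such a failure does not mean the underlying curve is degenerate: a different model of the same curve may well satisfy $(*)$. For each failing $f$ I would first confirm that it genuinely has genus $g$, and then repeatedly apply random hyperelliptic-preserving substitutions $(x,y)\leftarrow(x-a,\,y-h(x))$ with $a \in k$ and $\deg h \le g+1$, rechecking edge-nondegeneracy after each; as soon as one transformed model passes, the original curve is certified nondegenerate. The delicate issue is that a failed random search can never by itself prove a curve is \emph{not} nondegenerate, so I would quarantine for separate inspection any curve producing a long run (say $1000$) of consecutive failures. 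The lemma then follows because, across all of (1a) and (1b), this quarantine list remains empty: in every case a nondegenerate model is found, so all hyperelliptic curves of genus at most $3$ over a finite field are nondegenerate.
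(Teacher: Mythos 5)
Your proposal matches the paper's argument: Proposition~\ref{yaysec1} reduces the problem to genus $2$ and $3$ over $\F_2$ and $\F_4$, and the remaining cases are settled by exactly the exhaustive computation described in Section~\ref{bruteforce} (enumerate models in the given triangles, filter by interior lattice points via Baker's inequality, test edge-nondegeneracy, and retry failures with random substitutions $(x,y)\leftarrow(x-a,y-h(x))$, with the quarantine list staying empty). This is essentially the same approach as the paper, including your correct observation that the $1000$-failure threshold is only a heuristic stopping rule and that the lemma ultimately rests on every curve eventually producing a nondegenerate model.
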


In the plane quartic case, the list eventually contained exactly one polynomial for $k = \mathbb{F}_2$:
\[ f_2:(x+y)^4+(xy)^2+xy(x+y+1)+(x+y+1)^2. \]
We then tried \emph{all} projective linear transformations in $\PGL_3(\F_2)$ and found that, quite remarkably, $f_2$ is invariant under each of these transformations---the canonical embedding here is truly canonical!

Over $k=\mathbb{F}_3$, we were left with a set of polynomials that turned out to be all projectively equivalent to the polynomial
\[ f_3=y^3-y-(x^2+1)^2. \]
We exhaustively verified that none of the projectively equivalent polynomials is nondegenerate with
respect to its Newton polytope.

Over $\mathbb{F}_4$ and $\mathbb{F}_5$, the list remained empty.  We therefore have the following proposition.

\begin{prop}
Over any finite field $k$, all curves $C/k$ of genus at most $3$ are nondegenerate, except
if $k = \mathbb{F}_2$ and $C$ is $k$-birationally equivalent to $C_2$, or if $k = \F_3$ and
$C$ is $k$-birationally equivalent to $C_3$.
\end{prop}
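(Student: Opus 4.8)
The plan is to assemble the results already established, organizing curves of genus at most $3$ according to the classical structure of their canonical models. First I would dispose of the low-genus cases: a curve of genus $0$ is nondegenerate by the theorem of \cite{mainpaper} recalled above; a curve of genus $1$ over a finite field always has a rational point by the Weil bound (since $\#C(k) \geq (\sqrt{\#k}-1)^2 > 0$) and so is also covered by that theorem; and every curve of genus $2$ is hyperelliptic. For genus $3$ I would invoke the standard dichotomy: $C$ is either hyperelliptic or else its canonical map embeds it as a nonsingular plane quartic in $\PP^2$. Thus every curve of genus at most $3$ falls into one of the three families treated above — hyperelliptic of genus $2$, hyperelliptic of genus $3$, or nonsingular plane quartic — and it remains only to collect the corresponding bounds.

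For the hyperelliptic curves, Proposition~\ref{yaysec1} yields nondegeneracy whenever $\#k$ is odd or $\#k \geq g+4$, leaving only $k = \F_2$ and $k = \F_4$ for $g = 2,3$; the exhaustive \textsf{Magma} computation then settles these remaining fields, so all hyperelliptic curves of genus at most $3$ are nondegenerate. For the plane quartics, Lemma~\ref{planequartics} gives nondegeneracy whenever $\#k \geq 7$, leaving $k \in \{\F_2, \F_3, \F_4, \F_5\}$, which are again treated by direct computation. Over $\F_4$ and $\F_5$ the computation produces no exceptions, and over $\F_2$ and $\F_3$ it singles out precisely $f_2$ and $f_3$, the defining polynomials of $C_2$ and $C_3$. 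Combining these with the theorem of \cite{mainpaper} for $\#k \geq 17$ then yields the conclusion in the positive direction.

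The delicate point — and the step I expect to be the main obstacle — is to turn these finite computations into a genuine proof that $C_2$ and $C_3$ are \emph{not} nondegenerate, since the definition of nondegeneracy permits \emph{any} birational model. Here I would use the classification of maximal lattice polygons with exactly $3$ interior lattice points, together with the fact from \cite{mainpaper} that a nondegenerate model may be enlarged so that $\Delta(f)$ becomes such a maximal polygon without destroying nondegeneracy. The three interior lattice points are either collinear — forcing the curve to be hyperelliptic — or span a triangle, in which case the maximal polygon is the size-$4$ simplex $\conv\{(0,0),(4,0),(0,4)\}$ and the model is a plane quartic. Since $C_2$ and $C_3$ are non-hyperelliptic of genus $3$, any nondegenerate model would therefore have to be a plane quartic, hence projectively equivalent over $k$ to the fixed quartic $f_2$ (respectively $f_3$) by some element of $\PGL_3$. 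Because $f_2$ is invariant under all of $\PGL_3(\F_2)$ and is not nondegenerate, and because every polynomial projectively equivalent to $f_3$ over $\F_3$ was checked and found to be degenerate, no nondegenerate plane-quartic model can exist; this rules out nondegeneracy entirely and completes the exceptional cases.
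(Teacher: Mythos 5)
Your proposal is correct and follows essentially the same route as the paper: reduce to the trichotomy (hyperelliptic genus $2$, hyperelliptic genus $3$, plane quartic), combine Proposition~\ref{yaysec1} and Lemma~\ref{planequartics} with the exhaustive computation over the small fields, and handle the converse direction for $C_2$ and $C_3$ by noting that a nondegenerate model of a nonhyperelliptic genus-$3$ curve has three non-collinear interior lattice points, so after a $\mathbb{Z}$-affine transformation its Newton polytope lies in $4\Sigma$ and the model is covered by the $\PGL_3$-exhaustive check. This is precisely the argument given in the paper's proof, so no further comparison is needed.
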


\begin{proof}
It remains to show that if $C$ is a nonhyperelliptic curve of genus $3$ which 
can be modeled by a nondegenerate Laurent polynomial $f$, then it 
can be modeled by a nondegenerate Laurent polynomial whose Newton
polytope is contained in $4\Sigma$, the convex hull of the points $(0,0)$, $(0,4)$, and $(4,0)$. This is true
because $\Delta(f)$ has three interior lattice points which are not collinear, since $C$ is not hyperelliptic \cite[Lemma 5.1]{mainpaper}. Applying a $\mathbb{Z}$-affine transformation
to the exponent vectors, we may assume that in fact the interior lattice points of $\Delta(f)$ are $(1,1)$, $(1,2)$, and $(2,1)$.  But then $\Delta(f)$ is contained in the maximal polytope with these interior lattice points, which is $4\Sigma$ \cite[Lemma 10.2]{mainpaper}. The result follows.
\end{proof}

We conclude with a remark on the total complexity of the above computation. Since we are
only interested in curves up to birational equivalence, rather than simply enumerating all polynomials of a given form one could instead enumerate curves by their moduli.  Questions of this type in low genus have been pursued by many authors: Cardona, Nart, and Pujol\`as \cite{CNP} and Espinosa Garc\'ia, Hern\'andez Encinas, and Mu\~noz Masqu\'e \cite{EGHEMM} study genus $2$; Nart and Sadornil \cite{NartSad} study hyperelliptic curves of genus $3$; and Nart and Ritzenthaler \cite{NartRitz} study nonhyperelliptic curves of genus $3$ over fields of even characteristic.  In this paper we used a more naive approach since it is
more transparent, easier to implement, and at the same time still feasible.

We did however make use of the following speed-ups.  For hyperelliptic curves of genus $g=3$ with $\#k=4$, the  coefficient of $x^8$ and the constant term can always be taken $1$; for plane quartics with $\#k=4$, the coefficients of $x^4$ and $y^4$ and the constant term can always be taken $1$.  Finally, for plane quartics with $\#k=5$, from the proof of Lemma~\ref{planequartics}, we may assume that there exist at least two $k$-rational tangent lines that are only tangent over $\overline{k}$ (otherwise there exist enough nontangent lines to ensure nondegeneracy); transforming these to $x$- and $y$-axis, we may thus assume that $f(x,0) = (ax^2 + bx + 1)^2$ and $f(0,y) = (cy^2 + dy + 1)^2$ with $a,b,c,d \in k$.

\section{Extremal properties} \label{extremal}

In this section, let $C_2$ and $C_3$ denote the complete nonsingular models of the curves defined as in the main theorem.

The curve $C_2$ can be found in many places in the existing literature.  It enjoys some remarkable properties concerning the number $\#C_2(\mathbb{F}_{2^m})$ of $\mathbb{F}_{2^m}$-rational points for various values of $m$.  First, it has no $\mathbb{F}_2$-rational points. However, over $\mathbb{F}_4$ and $\mathbb{F}_8$ it has $14$ and $24$ points, respectively; in both cases, this is the maximal number of rational points possible on a complete nonsingular genus $3$ curve, and in each case $C_2$ is the unique curve obtaining this bound (up to isomorphism).  However, over $\mathbb{F}_{32}$ the curve becomes pointless again! And once more, it is the unique curve having this property.  For the details,
see Elkies \cite[Section~3.3]{Elkies}. We refer to work of Howe, Lauter, and Top \cite[Section~4]{HLT} for more on pointless curves of genus $3$.  It is remarkable that this curve is also distinguished by considering conditions of nondegeneracy.

In fact, $C_2$ is a twist of the reduction modulo $2$ of the Klein quartic (defined by the equation $x^3y + y^3z + z^3x=0$), which has more extremal properties. For instance,
Elkies \cite[Section~3.3]{Elkies} has shown that the Klein quartic modulo $3$ is extremal over fields of the form $\mathbb{F}_{9^m}$.  If $m$ is odd, its number of points is maximal.  If $m$ is even, its number of points is minimal.  Although the curve $C_3$ is
not isomorphic over $\overline{\mathbb{F}}_3$ to the Klein quartic, over $\mathbb{F}_{27}$ it
has the same characteristic polynomial of Frobenius, being $(T^2 + 27)^3$.  It follows that
$C_3$ shares the extremal properties of the Klein quartic over fields of the form $\mathbb{F}_{3^{6m}}$: $C_3$ has the maximal number of points possible if $m$ is odd, and the minimal number of points possible if
$m$ is even.

We conclude with the following question: Is there a hyperelliptic curve (of any genus) defined over a finite field which is not nondegenerate?  If so, it might also have interesting extremal properties.

\section*{Acknowledgements}

The first author would like to thank Alessandra Rigato for some helpful comments on curves over finite fields having many or few rational points.

\end{document}